\newtheorem{thm}{Theorem}[section]
\newtheorem{lem}[thm]{Lemma}
\newtheorem{prop}[thm]{Proposition}
\theoremstyle{definition}
\newtheorem{defn}[thm]{Definition}
\newtheorem{example}[thm]{Example}
\theoremstyle{remark}
\theoremstyle{plain}                    
\numberwithin{equation}{section}
\def\DJ{{\fontencoding{T1}\selectfont\char208}}
\newcommand{\bfz}{{\bf 0}}
\DeclareRobustCommand\widecheck[1]{{\mathpalette\@widecheck{#1}}}
\def\@widecheck#1#2{%
    \setbox\z@\hbox{\m@th$#1#2$}%
    \setbox\tw@\hbox{\m@th$#1%
       \widehat{%
          \vrule\@width\z@\@height\ht\z@
          \vrule\@height\z@\@width\wd\z@}$}%
    \dp\tw@-\ht\z@
    \@tempdima\ht\z@ \advance\@tempdima2\ht\tw@ \divide\@tempdima\thr@@
    \setbox\tw@\hbox{%
       \raise\@tempdima\hbox{\scalebox{1}[-1]{\lower\@tempdima\box
\tw@}}}%
    {\ooalign{\box\tw@ \cr \box\z@}}}
\begin{document}

\title[A result on the size of iterated sumsets in $\mathbb{Z}^d$]{A result on the size of iterated sumsets in $\mathbb{Z}^d$}


\author[Ilija Vre\' cica]{Ilija Vre\' cica}

\address{
\begin{flushleft}
University of Belgrade,
Faculty of Mathematics, \\
Studentski Trg 16, p.p. 550,
11000 Belgrade, Serbia
\end{flushleft}
}

\email{ilijav@matf.bg.ac.rs}

\begin{abstract}
     In this paper we give a different approach to determining the cardinality of $h$-fold sumsets $hA$
     when $A\subset \mathbb{Z}^d$ has $d+2$ elements. This enables us to provide more
     general result with a shorter and simpler proof.

   We also obtain an upper bound for
     the value of $|hA|$ when $A\subset \mathbb{Z}^d$ is a set of $d+3$ elements with simplicial hull.
\end{abstract}


\date{\today}

\keywords{Lattices, Minkowsky theory, sumsets, Khovanskii's theorem}

\subjclass[2010]{11H06, 52B20, 05A15, 11P21}

\thanks{This work was  partially supported by  Ministry of Education, Science and Technological Development of  Republic of Serbia, Project no.  174034}

\maketitle


\section{Introduction}

An important area of study in arithmetic combinatorics is the $h$-fold sumset. For a set $A\subset\mathbb{Z}^d$, the $h$-fold sumset is

$$hA=\{a_1+\dots+a_h\, :\, a_i\in A\}.$$
An important contribution to this area of study is the following theorem due to Khovanskii:

\begin{thm}\cite{K}
    Given a finite set $A\subset\mathbb{Z}^d$, there exists a polynomial $p\in \mathbb{Q}[x]$ of degree at most $d$ such that $|hA|=p(h)$ for all sufficiently large $h$. Further, if $A-A$ generates all of $\mathbb{Z}^d$ additively, then $\deg p=d$ and the leading coefficient of $p$ is the volume of the convex hull of $A$.
\end{thm}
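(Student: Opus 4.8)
The plan is to realize $|hA|$ as the Hilbert function of a standard graded $\C$-algebra, apply the Hilbert--Serre theorem to obtain eventual polynomiality together with the bound on the degree, and then pin down the leading coefficient by an Ehrhart-type counting argument. First I would translate $A$ (harmlessly, since this changes neither the pattern $h\mapsto|hA|$, nor $A-A$, nor the volume of the convex hull) so that $0\in A$, and write $A=\{a_0=0,a_1,\dots,a_k\}$. Let $M\subseteq\Z^{d+1}$ be the submonoid generated by the vectors $(a_0,1),\dots,(a_k,1)$, and grade the monoid algebra $\C[M]$ by the last coordinate, so that each generator $t^{(a_i,1)}$ has degree $1$. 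A product of $h$ generators equals $t^{(v,h)}$ with $v=a_{i_1}+\dots+a_{i_h}$, so the graded piece $\C[M]_h$ has $\C$-basis $\{\,t^{(v,h)}:v\in hA\,\}$ and hence $\dim_{\C}\C[M]_h=|hA|$; moreover $\C[M]$ is finitely generated over $\C$ in degree $1$, i.e.\ a standard graded quotient of $\C[x_0,\dots,x_k]$.

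By the Hilbert--Serre theorem, the Hilbert function of $\C[M]$ agrees for $h\gg 0$ with a polynomial $p\in\Q[x]$ of degree $\dim\C[M]-1$. For a monoid algebra $\dim\C[M]=\operatorname{rank}\Z M$, and since $(a_i,1)-(a_0,1)=(a_i,0)$ we have $\Z M=\Z(a_0,1)+\sum_i\Z(a_i,0)$, of rank $1+\operatorname{rank}\langle a_1,\dots,a_k\rangle_{\Z}$; as $\langle a_1,\dots,a_k\rangle_{\Z}\subseteq\Z^d$, this is at most $d+1$, giving $\deg p\le d$ in general. If in addition $A-A$ generates $\Z^d$, then $\langle a_1,\dots,a_k\rangle_{\Z}=\langle A-A\rangle_{\Z}=\Z^d$ has rank $d$, so $\dim\C[M]=d+1$ and $\deg p=d$ exactly.

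It then remains to show, under the spanning hypothesis, that the leading coefficient $\lim_{h\to\infty}|hA|/h^{d}$ equals $\operatorname{vol}(\operatorname{conv}(A))$. The upper bound is immediate: $hA\subseteq h\cdot\operatorname{conv}(A)\cap\Z^d$, and since $\operatorname{conv}(A)$ is a lattice polytope, Ehrhart's theorem gives $|h\cdot\operatorname{conv}(A)\cap\Z^d|=\operatorname{vol}(\operatorname{conv}(A))\,h^{d}+O(h^{d-1})$. For the matching lower bound I would show that $hA$ contains every $x\in h\cdot\operatorname{conv}(A)\cap\Z^d$ lying at distance at least some constant $C=C(A)$ from the boundary: writing $x/h=\sum_i\lambda_i a_i$ with $\sum_i\lambda_i=1$ and all $\lambda_i\ge\varepsilon>0$, the vector $x-\sum_i\lfloor h\lambda_i\rfloor a_i$ is a lattice vector whose norm is bounded independently of $h$, hence lies in a fixed finite set of $\Z$-linear combinations of differences from $A-A$ (this is where $A-A=\Z^d$ is used in full); these bounded corrections, together with padding by the zero summand $a_0$ to restore the exact count $h$, can be absorbed because each $\lfloor h\lambda_i\rfloor$ is $\ge\varepsilon h-O(1)$. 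Since the lattice points within distance $C$ of $\partial\bigl(h\cdot\operatorname{conv}(A)\bigr)$ number $O(h^{d-1})$, this yields $|hA|=\operatorname{vol}(\operatorname{conv}(A))\,h^{d}+O(h^{d-1})$, identifying the leading coefficient.

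The main obstacle I expect is exactly this last lower bound --- showing that $hA$ asymptotically exhausts the dilated hull. Making the ``absorb a bounded correction and re-fix the number of summands to exactly $h$'' step rigorous requires a uniform-in-$h$ bound on the correction vectors and a careful verification that the coordinates with $\lambda_i\ge\varepsilon$ always supply enough slack; everything else --- the monoid-algebra set-up, Hilbert--Serre, the dimension count, and the Ehrhart upper bound --- is comparatively routine.
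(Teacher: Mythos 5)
The paper does not prove this statement: it is quoted as background and attributed directly to Khovanskii's article [K], so there is no internal proof to compare yours against. Your proposal is, in substance, the classical argument: Khovanskii's own proof realizes $|hA|$ as the Hilbert function of a finitely generated graded module (your monoid algebra $\C[M]$ graded by height), and Hilbert--Serre together with the computation $\dim\C[M]=\operatorname{rank}\Z M=1+\operatorname{rank}\langle A\rangle_{\Z}$ gives eventual polynomiality and the degree statement exactly as you describe. Identifying the leading coefficient via the Ehrhart upper bound $hA\subseteq h\Delta_A\cap\Z^d$ plus an ``interior lattice points are hit'' lower bound is also the standard route.

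One caveat on the lower bound, which you correctly flag as the delicate step. The two conditions you use interchangeably --- ``$x$ lies at distance $\ge C$ from $\partial(h\Delta_A)$'' and ``$x/h=\sum_i\lambda_ia_i$ with every $\lambda_i\ge\varepsilon$ for a fixed $\varepsilon$'' --- are not equivalent, and the mismatch affects the error term. A point at distance $\ge C$ from the boundary can force some $\lambda_i$ as small as $O(1/h)$ (take $d=1$, $A=\{0,1\}$, $x=C+1$), so no uniform $\varepsilon$ is available there; conversely, the region where all $\lambda_i\ge\varepsilon$ is a homothetically shrunk copy of $h\Delta_A$ whose complement in $h\Delta_A$ contains on the order of $h^d$ lattice points, not $O(h^{d-1})$. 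Either fix works: (i) keep $\varepsilon$ fixed, conclude $\liminf_h|hA|/h^d\ge\operatorname{vol}(\Delta_\varepsilon)$ for the shrunk polytope $\Delta_\varepsilon$, and let $\varepsilon\to0$, which is legitimate because Hilbert--Serre has already guaranteed that $\lim_h|hA|/h^d$ exists; or (ii) replace ``$\lambda_i\ge\varepsilon$'' by ``$h\lambda_i\ge C$'' throughout, which excludes only $O(h^{d-1})$ lattice points. Separately, when absorbing the bounded correction $x-\sum_i\lfloor h\lambda_i\rfloor a_i$, it is cleaner to do the bookkeeping in $\Z^{d+1}$ with the lifted vectors $(a_i,1)$, whose $\Z$-span is all of $\Z^{d+1}$ once $0\in A$ and $A-A$ spans $\Z^d$: the last coordinate then forces the number of summands to equal $h$ automatically, instead of requiring a separate argument that the padding by $a_0=0$ never needs to be negative. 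With those repairs your sketch is a complete and correct proof.
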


The proof of this theorem was, however, ineffective, as it yielded no information about the polynomial past its degree and leading term. There have been successes in bounding the integer $h_0$ such that $|hA|=p(h)$ for $h\geqslant h_0$ (\cite{GSW}, for instance).

However, in a recent paper (\cite{CG}), the cardinality of $|hA|$ has been completely described for all positive integers $h$, where $A\subset \mathbb{Z}^d$ is a set with $d+2$ elements, such that $A-A$ additively generates $\mathbb{Z}^d$.

\begin{defn}
    For $A\subset \mathbb{Z}^d$, we will denote the convex hull of $A$ with $\Delta_A$.
\end{defn}

The main result of the paper \cite{CG} is

\begin{thm}\label{thm:1_2_u_CG}\cite[Theorem 1.2]{CG}
    Suppose $A\subset \mathbb{Z}^d$ consists of $d+2$ elements, and further that $A-A$ generates $\mathbb{Z}^d$ additively. Then

    $$|hA|={h+d+1\choose d+1}\,\,\,\, {\rm whenever}\, 0\leqslant h< {\rm vol}(\Delta_A)\cdot d!$$
    and

    $$|hA|={h+d+1\choose d+1}-{h-{\rm vol}(\Delta_A)\cdot d!+d+1\choose d+1}\,\,\,\, {\rm whenever}\, h\geqslant {\rm vol}(\Delta_A)\cdot d!.$$
\end{thm}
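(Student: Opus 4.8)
The plan is to realise $hA$ as the image of a set of lattice points in a dilated simplex under a fixed linear map and to count the collisions of that map exactly. Write $A=\{a_0,a_1,\dots,a_{d+1}\}$ and let $\phi\colon\Z^{d+2}\to\Z^{d}$ be the linear map with $\phi(e_i)=a_i$. Putting $S_h=\{\,m\in\Z_{\ge 0}^{d+2}:m_0+\cdots+m_{d+1}=h\,\}$, we have $hA=\phi(S_h)$ and $|S_h|=\binom{h+d+1}{d+1}$, so $|hA|=\binom{h+d+1}{d+1}-E(h)$, where $E(h):=\binom{h+d+1}{d+1}-|hA|=\sum_{F}(|F|-1)$, the sum running over the fibres $F$ of $\phi$ restricted to $S_h$. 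The first step is to describe those fibres. Let $K=\{\,z\in\Z^{d+2}:\sum_i z_i=0,\ \sum_i z_i a_i=0\,\}$ be the kernel of the homomorphism $\psi\colon z\mapsto(\sum_i z_i,\ \sum_i z_i a_i)$. A short computation, using that $A-A$ generates $\Z^d$, shows $\mathrm{im}\,\psi=\{(h,\,h a_0+w):h\in\Z,\ w\in\Z^{d}\}$, which is isomorphic to $\Z^{d+1}$; in particular $\Z^{d+2}/K$ is torsion-free of rank $d+1$, so $K=\Z\lambda$ for some \emph{primitive} vector $\lambda=(\lambda_0,\dots,\lambda_{d+1})$ with $\sum_i\lambda_i=0$ and $\sum_i\lambda_i a_i=0$. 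For $m,m'\in S_h$ we have $\phi(m)=\phi(m')$ iff $m-m'\in K=\Z\lambda$, so each fibre of $\phi|_{S_h}$ is the set of lattice points of $S_h$ on a line in direction $\lambda$; since $\lambda$ is primitive and $S_h$ convex, such a fibre is an arithmetic progression $m,\ m+\lambda,\ \dots,\ m+(k-1)\lambda$.

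The second step evaluates $E(h)$. Let $P=\{i:\lambda_i>0\}$ (nonempty, since $\lambda\neq 0$ and $\sum_i\lambda_i=0$) and set $D:=\sum_{i\in P}\lambda_i\ge 1$. A point $m\in S_h$ is the bottom end of its fibre precisely when $m-\lambda\notin S_h$; since $(m-\lambda)_i=m_i-\lambda_i\ge 0$ automatically whenever $\lambda_i\le 0$, this is equivalent to $m_i<\lambda_i$ for some $i\in P$. Therefore the number of points of $S_h$ that are \emph{not} bottom ends of their fibres — which is exactly $E(h)$ — equals $\#\{\,m\in\Z_{\ge 0}^{d+2}:\sum_i m_i=h,\ m_i\ge\lambda_i\text{ for all }i\in P\,\}$; the change of variables $m_i\mapsto m_i-\lambda_i$ for $i\in P$ identifies this with $\#\{\,m\in\Z_{\ge 0}^{d+2}:\sum_i m_i=h-D\,\}$, which is $\binom{h-D+d+1}{d+1}$ for $h\ge D$ and $0$ for $h<D$. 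This already yields $|hA|=\binom{h+d+1}{d+1}$ for $0\le h<D$ and $|hA|=\binom{h+d+1}{d+1}-\binom{h-D+d+1}{d+1}$ for $h\ge D$, i.e. the two formulas of the theorem with $\mathrm{vol}(\Delta_A)\cdot d!$ replaced by $D$.

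The third step identifies $D$ with $\mathrm{vol}(\Delta_A)\cdot d!$. For $h\ge D$ the expression $\binom{h+d+1}{d+1}-\binom{h-D+d+1}{d+1}$ is a polynomial in $h$: the $h^{d+1}$–terms cancel, and a one–line computation shows it has degree $d$ with leading coefficient $\tfrac{(d+1)D}{(d+1)!}=\tfrac{D}{d!}$. By Khovanskii's theorem, this polynomial coincides with the polynomial $p$ for which $|hA|=p(h)$ for all large $h$; and since $A-A$ generates $\Z^d$, that $p$ has degree $d$ and leading coefficient $\mathrm{vol}(\Delta_A)$. Comparing leading coefficients gives $D=\mathrm{vol}(\Delta_A)\cdot d!$, and substituting this back into the formulas of the second step completes the proof. (Alternatively $D$ can be pinned down directly: under the hypothesis that $A-A$ generates $\Z^d$, the primitive vector $\lambda$ equals, up to sign, the alternating vector of $(d+1)\times(d+1)$ minors of the matrix obtained from $[\,a_0\ a_1\ \cdots\ a_{d+1}\,]$ by adjoining a row of ones, so $|\lambda_i|=d!\cdot\mathrm{vol}\big(\mathrm{conv}(A\setminus\{a_i\})\big)$ for every $i$; since $\{\,\mathrm{conv}(A\setminus\{a_i\}):i\in P\,\}$ is one of the two triangulations of $\Delta_A$, summing these volumes gives $\mathrm{vol}(\Delta_A)$, whence $D=\sum_{i\in P}|\lambda_i|=d!\cdot\mathrm{vol}(\Delta_A)$.)

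The step I expect to carry the real weight is the first one: establishing that the collision structure of $\phi|_{S_h}$ is governed by a single \emph{primitive} lattice vector. This is precisely where the hypothesis that $A-A$ generates $\Z^d$ is used essentially — it is what forces $\Z^{d+2}/K$ to be torsion-free, hence $K$ to be generated by a primitive vector, hence the fibres to be full lattice segments in direction $\lambda$; without this the clean bijection underlying the second step (bottom ends of fibres versus a shifted copy of $S_{h-D}$) would break down. Everything after it is essentially bookkeeping: the evaluation of $E(h)$ is a single change of variables, and the value of $D$ falls out of the already–quoted theorem of Khovanskii.
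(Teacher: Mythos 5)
Your proof is correct, and its combinatorial core --- realizing $hA$ as $\phi(S_h)$, observing that collisions are governed by the single integer relation $\lambda$ with $\sum_i\lambda_i=0$, noting each fibre is a contiguous progression in direction $\lambda$, and counting the excess by the shift $m_i\mapsto m_i-\lambda_i$ on the positive support --- is the same mechanism as the paper's proof, which phrases it as ``each $w\in hA$ has exactly one representation with $\alpha_i<\lambda_i$ for some $i$ in the positive part.'' You diverge in the two supporting steps, and both divergences are worth noting. First, you obtain $\lambda$ abstractly: since $A-A$ generates $\Z^d$, the quotient $\Z^{d+2}/K\cong\Z^{d+1}$ is torsion-free, so $K$ is generated by a single (primitive) vector and \emph{every} integer relation is an integer multiple of $\lambda$. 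The paper instead constructs $\lambda$ explicitly via Cramer's rule as the vector of maximal minors and asserts without proof that any difference of two representations is an integer multiple of it; that assertion is exactly the primitivity statement you prove, so your route is cleaner and actually justifies a point the paper leaves implicit. Second, you identify $D=\sum_{i\in P}\lambda_i$ with $\mathrm{vol}(\Delta_A)\cdot d!$ by matching leading coefficients against Khovanskii's polynomial. This is logically sound (Khovanskii's theorem is an independent prior result, so there is no circularity), but it imports a nontrivial black box into a statement whose whole point is to make Khovanskii's theorem explicit; the paper instead computes $D$ directly, showing via Radon's theorem that $\{\mathrm{conv}(A\setminus\{a_i\}):i\in P\}$ triangulates $\Delta_A$ so that $\sum_{i\in P}\lambda_i=\mathrm{vol}(\Delta_A)\cdot d!$ --- which is essentially the argument you sketch in your parenthetical alternative. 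If you want a fully self-contained proof you should promote that alternative to the main text and supply the triangulation argument in detail (in particular why the positive-support simplices cover $\Delta_A$ with disjoint interiors); as written, the Khovanskii route is valid but the parenthetical version is asserted rather than proved.
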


The proof in \cite{CG} treats two cases in two different ways. The
first case is when $\Delta_A$ is a simplex with $d+1$ vertices and
$(d+2)$-nd element of $A$ is in $\Delta_A$, and the second case is
when $\Delta_A$ is a polytope with $d+2$ vertices. However, the
proof of the second case contained a misstep as noticed in the
first version of this paper, (see section 3). In the second
version of the paper \cite{CG} this proof is corrected using the
same idea as in the original proof.

In this paper we provide a different approach to the proof of
\cite[Theorem 1.2]{CG} and establish the more general result
treating also the sets $A$ for which the set $A-A$ does not
necessarily generate $\mathbb{Z}^d$ additively. This is our main
result and we show that the theorem from \cite{CG} is its direct
corollary. An additional advantage of our approach is that we
obtain a shorter and simpler proof which does not treat two cases in
different ways but provides a unified proof.

In the last section we briefly discuss the sets $A\subset
\mathbb{Z}^d$ of $d+3$ elements. We want to show that this
situation is considerably more complicated, as two such sets with
the same convex hull and the same $d+2$ elements could produce
different polynomials. So, it might be of some interest to obtain
an upper bound for $hA$ in this case.

\section{Lemmas}

For $v=(v_1,\dots,v_d)\in \mathbb{Z}^d$, we define its lift to be $\widetilde{v}=(v_1,\dots,v_d,1)\in \mathbb{Z}^{d+1}$. If $v=(v_1,\dots,v_d)\in \mathbb{Z}^d$ and $h\in\mathbb{N}$, then we write $(v,h)$ instead of $(v_1,\dots,v_d,h)$, and refer to $h$ as the height of this point.

\begin{defn}
    For a set $A=\{v_1,\dots,v_k\}\subset \mathbb{Z}^d$, the cone of $A$ is

    $$\mathcal{C}_A:={\rm span}_\mathbb{N}(\widetilde{v}_1,\dots,\widetilde{v}_k)=\{n_1\widetilde{v}_1+\dots+n_k\widetilde{v}_k\,\,:\,\, n_1,\dots,n_k\in\mathbb{N}\}.$$
\end{defn}
To the cone $\mathcal{C}_A$, we associate the generating series

$$\mathcal{C}_A(t):=\sum_{a\in\mathcal{C}_A}t^{{\rm height}(a)}.$$

Since the points at height $h$ form a copy of $|hA|$ embedded in $\mathbb{Z}^{d+1}$, we have that

$$\mathcal{C}(t)=\sum_{h\geqslant 0}|hA|t^h.$$

Let $A$ be a $d+3$ element set in $\mathbb{Z}^d$ such that $A-A$ generates $\mathbb{Z}^d$ additively and $\Delta_A$ is a $d$-simplex. Denote the $d+1$ vertices of $\Delta_A$ by $v_1,\dots,v_{d+1}$. These span a lattice $\Lambda={\rm span}_{\mathbb{Z}}(\widetilde{v}_1,\dots,\widetilde{v}_{d+1})$ in $\mathbb{Z}^{d+1}$. For such a lattice, we denote the set

$$\Pi :=\left\{ \sum_{i=1}^{d+1}\lambda_i\widetilde{v}_i\,:\, 0\leqslant \lambda_i<1\right\}\cap \mathbb{Z}^{d+1}.$$

In the paper, we will also encounter $\Lambda^+:={\rm span}_{\mathbb{N}}(\widetilde{v}_1,\dots,\widetilde{v}_{d+1})$, and define $N_\Lambda$ as the cardinality of $\Pi$.

We partition $\mathcal{C}_A$ into residue classes $\pi$ (mod $\Lambda$), each of which can be represented by an element of $\Pi$. For $\pi \in \Pi$, we denote its residue class by $\mathcal{S}_\pi$. An element $(g,N)\in \mathcal{S}_\pi$ is said to be minimal if $(g,N)-\widetilde{v}_i$ does not lie in $\mathcal{C}_A$ for any $i$.

From the geometry of numbers, we know that if we have a lattice \\$\Lambda={\rm span}(\widetilde{v}_1,\dots,\widetilde{v}_{d+1})$ in $\mathbb{Z}^{d+1}$ with a fundamental domain of nonzero volume, then $\mathbb{Z}^{d+1}/\Lambda$ can be identified with the set of lattice points in the fundamental domain of $\Lambda$, and that this number is equal to the determinant of the matrix whose columns are the generating vectors $\widetilde{v}_i$. Therefore,

$$N_\Lambda=|\mathbb{Z}^{d+1}/\Lambda|={\rm vol}(\Delta_A)d!.$$
(For this claim, we refer the reader for example to \cite[Ch. 6, Sec. 1]{N}.)

\begin{lem}\cite[Lemma 3.1]{CG}\label{lem:broj klasa}
    If $(\alpha,M)$ is a minimal element of $\mathcal{S}_\pi$, then

    $$M\leqslant N_\Lambda -1.$$
\end{lem}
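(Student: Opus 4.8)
Lemma~\ref{lem:broj klasa}: if $(\alpha, M) \in \mathcal{S}_\pi$ is minimal, then $M \leqslant N_\Lambda - 1$.

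The plan is to exploit a pigeonhole argument on the chain of lattice points obtained by repeatedly subtracting one of the lift vectors $\widetilde v_i$. Concretely, suppose for contradiction that $(\alpha, M)$ is minimal but $M \geqslant N_\Lambda$. Since $(\alpha, M) \in \mathcal{C}_A$, we may write $(\alpha, M) = \sum_{i=1}^{k} n_i \widetilde v_i$ with all $n_i \in \mathbb{N}$ (here $k = d+3$, but only the vertex lifts $\widetilde v_1,\dots,\widetilde v_{d+1}$ matter for the argument); the height condition forces $n_1 + \dots + n_k = M \geqslant N_\Lambda$. First I would show that minimality of $(\alpha, M)$ inside its residue class $\mathcal{S}_\pi$ means we cannot subtract any single $\widetilde v_i$ and stay in $\mathcal{C}_A$, so in particular the point $(\alpha,M)$ sits ``at the bottom'' of its residue class in a suitable partial order. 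The key is then to look at the $N_\Lambda + 1$ points
$$
(\alpha, M), \quad (\alpha,M) - \widetilde v_{j}, \quad (\alpha,M) - 2\widetilde v_{j}, \quad \dots
$$
obtained by peeling off copies of the lift vectors — but one has to be careful, since minimality says we cannot peel even one copy while staying in $\mathcal{C}_A$. So instead the right chain to consider goes \emph{upward}.

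Here is the version I would actually run. Consider the $N_\Lambda$ points $(\alpha,M), (\alpha,M)+\widetilde v_1, (\alpha,M)+2\widetilde v_1, \dots, (\alpha,M)+(N_\Lambda-1)\widetilde v_1$. Wait — these are all in distinct residue classes only if $\widetilde v_1$ has order $\geqslant N_\Lambda$ in $\mathbb{Z}^{d+1}/\Lambda$, which need not hold. The cleaner approach: since $M \geqslant N_\Lambda = |\mathbb{Z}^{d+1}/\Lambda|$ and $(\alpha,M) = \sum n_i \widetilde v_i$ with $\sum n_i = M \geqslant N_\Lambda$, consider the $N_\Lambda + 1$ partial sums formed by adding the $\widetilde v_i$ one at a time in some order: $\bfz, \widetilde v_{i_1}, \widetilde v_{i_1}+\widetilde v_{i_2}, \dots$ Among the first $N_\Lambda+1$ of these, two lie in the same residue class mod $\Lambda$ by pigeonhole, say the partial sums after $a$ and after $b$ steps with $a < b$; their difference $\delta = \sum_{j=a+1}^{b} \widetilde v_{i_j}$ is a nonzero element of $\Lambda$, and it is a nonnegative combination $\sum c_i \widetilde v_i$ of the vertex lifts with $\sum c_i = b - a \leqslant N_\Lambda$. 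Now I claim $\delta$ can be rewritten: since $\delta \in \Lambda$, write $\delta = \sum m_i \widetilde v_i$ with $m_i \in \mathbb{Z}$; comparing with $\delta = \sum c_i \widetilde v_i$ and using that $\widetilde v_1,\dots,\widetilde v_{d+1}$ are linearly independent gives $m_i = c_i \geqslant 0$. The point is that we can subtract $\delta$ from $(\alpha,M)$: since $c_i \leqslant n_i$ is \emph{not} guaranteed, this is where care is needed.

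The honest fix, which is how I expect the proof to go: choose the order of summation greedily so that the running partial sums use up the available $n_i$'s, i.e. realize $(\alpha,M)$ as an ordered sum of $M$ lift vectors $\widetilde v_{i_1} + \dots + \widetilde v_{i_M}$ (with $\widetilde v_i$ appearing $n_i$ times). Look at the $N_\Lambda + 1$ suffix sums $s_t := \widetilde v_{i_t} + \widetilde v_{i_{t+1}} + \dots + \widetilde v_{i_M}$ for $t = M - N_\Lambda, \dots, M, M+1$ (interpreting $s_{M+1} = \bfz$); these are $N_\Lambda + 1$ distinct heights but we reduce mod $\Lambda$ and apply pigeonhole to get $t < t'$ with $s_t \equiv s_{t'}$. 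Then $s_t - s_{t'} = \widetilde v_{i_t} + \dots + \widetilde v_{i_{t'-1}} \in \Lambda$ is a nonzero nonnegative integer combination $\sum c_i \widetilde v_i$ of the vertex lifts, and because these suffix-sum terms are \emph{literally present} in the chosen representation of $(\alpha,M)$, we have $c_i \leqslant n_i$ for all $i$. Hence $(\alpha,M) - \sum c_i \widetilde v_i = \sum(n_i - c_i)\widetilde v_i \in \mathcal{C}_A$, lies in the same residue class $\mathcal{S}_\pi$, and (since $\sum c_i \geqslant 1$) equals $(\alpha',M')$ with $M' < M$. Writing $\sum c_i \widetilde v_i$ as a sum of $\sum c_i$ individual lift vectors and removing them one at a time exhibits, at the last step, a vector $\widetilde v_j$ with $(\alpha,M) - \widetilde v_j \in \mathcal{C}_A$ — wait, not at the first step necessarily, but we only need \emph{some} intermediate point; in fact $(\alpha, M) - \widetilde v_{i_t} = \sum_{s \neq t} \widetilde v_{i_s} \in \mathcal{C}_A$ directly, contradicting minimality of $(\alpha,M)$.

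So the skeleton is: (1) assume $M \geqslant N_\Lambda$ for contradiction; (2) fix an ordered $\mathbb{N}$-representation of $(\alpha,M)$ by $M$ vertex lifts; (3) pigeonhole on $N_\Lambda + 1$ suffix sums mod $\Lambda$ to find a repeated class; (4) conclude one of the summand vectors $\widetilde v_{i_t}$ can be removed while staying in $\mathcal{C}_A$, contradicting minimality. The main obstacle — and the step I would write most carefully — is ensuring the pigeonhole is applied to a set of points all of which can genuinely be ``reached downward'' from $(\alpha,M)$ inside $\mathcal{C}_A$; this is exactly what forces the use of suffix sums of a \emph{fixed} representation rather than arbitrary lattice translates, and it is the point at which one must use that $\Delta_A$ being a $d$-simplex makes $\widetilde v_1, \dots, \widetilde v_{d+1}$ linearly independent so that the $\mathbb{N}$-coefficients in $\Lambda$ are uniquely determined. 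A secondary point to check is that the removable vector is one of the $\widetilde v_i$ that actually appears in the standard generating list of $\mathcal{C}_A = \operatorname{span}_\mathbb{N}(\widetilde v_1,\dots,\widetilde v_{d+3})$, but since the $d+1$ vertex lifts are among the generators this is immediate.
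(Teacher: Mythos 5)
The paper itself offers no proof of this lemma (it is quoted from \cite{CG}), so I am judging your argument on its own terms. Your overall plan --- pigeonhole on $N_\Lambda+1$ partial sums of a fixed length-$M$ representation of $(\alpha,M)$ to extract a nonzero sub-sum $\delta\in\Lambda$ of positive height, then peel it off --- is the right idea. But the step where you actually derive the contradiction with minimality is broken. Minimality of $(\alpha,M)$ in $\mathcal{S}_\pi$ forbids subtracting only the $d+1$ \emph{vertex} lifts $\widetilde{v}_1,\dots,\widetilde{v}_{d+1}$ (the generators of $\Lambda$); it says nothing about the other generators of $\mathcal{C}_A$, namely $(\bfz,1)$ and $\widetilde{w}$. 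Your punchline ``$(\alpha,M)-\widetilde{v}_{i_t}=\sum_{s\neq t}\widetilde{v}_{i_s}\in\mathcal{C}_A$ directly, contradicting minimality'' deletes an arbitrary summand of the chosen representation, and that summand may well be $(\bfz,1)$ or $\widetilde{w}$. Note also that this step uses neither the hypothesis $M\geqslant N_\Lambda$ nor the output of the pigeonhole: if it were valid it would show that \emph{every} element of $\mathcal{C}_A$ of positive height is non-minimal, so that a residue class not containing the origin would have no minimal element at all --- absurd, since an element of least height in a nonempty class is minimal. The same conflation of ``generator of $\mathcal{C}_A$'' with ``vertex lift'' occurs earlier, when you call the sub-sum $\delta$ a nonnegative integer combination of the vertex lifts: its summands are lifts of arbitrary elements of $A$.

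The gap is repairable, and you already name the key ingredient in passing. Since $\Delta_A$ is a simplex with vertices $v_1,\dots,v_{d+1}$, every generator of $\mathcal{C}_A$ --- in particular $(\bfz,1)$ and $\widetilde{w}$ --- is a nonnegative \emph{real} combination of the linearly independent vectors $\widetilde{v}_1,\dots,\widetilde{v}_{d+1}$; hence so is $\delta$. As $\delta\in\Lambda$, its unique coordinates in this basis are integers, therefore nonnegative integers, so $\delta=\sum_i c_i\widetilde{v}_i\in\Lambda^+$ with $\sum_i c_i>0$ (its height is $t'-t\geqslant 1$); pick $j$ with $c_j\geqslant 1$. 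Because $\delta$ is a block of consecutive summands of the fixed representation, $(\alpha,M)-\delta\in\mathcal{C}_A$, and then
$(\alpha,M)-\widetilde{v}_j=\bigl((\alpha,M)-\delta\bigr)+\bigl(\delta-\widetilde{v}_j\bigr)\in\mathcal{C}_A$,
since $\delta-\widetilde{v}_j\in\Lambda^+\subset\mathcal{C}_A$. \emph{That} contradicts minimality as defined; the single-summand deletion does not. With this substitution your proof goes through.
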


\begin{lem}\label{lem:racionalnost}
    Let $v_1,\dots, v_{d+1}\in \mathbb{Z}^d$ be vectors that generate $\mathbb{Z}^d$ and $\det (\widetilde{v}_1,\dots ,\widetilde{v}_{d+1})\\ \neq 0$, and let $w\in \mathbb{Z}^d$ be an integer vector such that $\widetilde{w}=a_1\widetilde{v}_1+\dots+a_{d+1}\widetilde{v}_{d+1}$, where $a_i$ are non-negative coefficients such that $a_1+\dots +a_{d+1}=1$. Then $a_i$ are rational numbers.
\end{lem}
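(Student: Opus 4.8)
The plan is to reduce the statement to an application of Cramer's rule. First I would form the $(d+1)\times(d+1)$ integer matrix $M$ whose $i$-th column is the lift $\widetilde v_i$. The hypothesis $\det(\widetilde v_1,\dots,\widetilde v_{d+1})\neq 0$ says precisely that $M$ is invertible over $\mathbb{Q}$; in particular the linear system $Mx=\widetilde w$ has a unique solution $x\in\mathbb{Q}^{d+1}$, computed entrywise by Cramer's rule.

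Next I would observe that the vector $a=(a_1,\dots,a_{d+1})^\top$ is exactly a solution of this system: the first $d$ coordinates of the identity $\widetilde w=\sum_i a_i\widetilde v_i$ record $\sum_i a_i v_i = w$, and the last coordinate records $\sum_i a_i = 1$ (this is the only place the normalization $a_1+\dots+a_{d+1}=1$ is used, and it is not in fact needed for the conclusion). By uniqueness, $a=M^{-1}\widetilde w$, so $a_i=\det(M_i)/\det(M)$, where $M_i$ is obtained from $M$ by replacing its $i$-th column by $\widetilde w$. Since $M$ and $\widetilde w$ have integer entries, $\det(M_i)$ and $\det(M)$ are integers and $\det(M)\neq 0$, hence each $a_i$ is rational.

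There is essentially no obstacle here: the only hypothesis genuinely used is $\det(\widetilde v_1,\dots,\widetilde v_{d+1})\neq 0$, which guarantees simultaneously that the representation is unique and that the common denominator $\det(M)$ is nonzero. The non-negativity of the $a_i$ and the fact that $v_1,\dots,v_{d+1}$ generate $\mathbb{Z}^d$ play no role and are recorded only because they hold in the intended application. Equivalently, one may phrase the conclusion by noting that $a$ is the vector of barycentric coordinates of $w$ with respect to the (here even integral) simplex with vertices $v_1,\dots,v_{d+1}$, and such barycentric coordinates are automatically rational.
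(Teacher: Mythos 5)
Your proof is correct and is essentially the paper's own argument: the paper also disposes of this lemma in one line by noting that $\widetilde{v}_1,\dots,\widetilde{v}_{d+1},\widetilde{w}$ are integer vectors with $\det(\widetilde{v}_1,\dots,\widetilde{v}_{d+1})\neq 0$ and invoking Cramer's rule. You have merely spelled out the details (including the accurate observation that the normalization $\sum_i a_i=1$ is already encoded in the last coordinate of the lifted equation), which is a fine elaboration of the same approach.
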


\begin{proof}
Since $\widetilde{v}_1,\dots,\widetilde{v}_{d+1},\widetilde{w}$ are all integer vectors and $\det (\widetilde{v}_1,\dots ,\widetilde{v}_{d+1})\neq 0$, by Cramer's rule coefficients $a_i$ will be rational numbers.
\end{proof}

We will also need a well known result from Combinatorial geometry,
Radon theorem and its extension which we prove here.

\begin{thm}\label{radon}
Every set $S$ of $d+2$ points in $\mathbb{R}^d$, could be split in
two disjoint subsets $S=S_1\cup S_2$ such that the convex hulls of
$S_1$ and $S_2$ intersect, i.e. $\text{\rm conv} S_1\cap \text{\rm
conv} S_2\neq \emptyset .$
\end{thm}

Actually we need the following extension of this theorem, saying
that in generic case (when no $d+1$ points belong to the same
hyperplane), this splitting is unique.

\begin{thm}\label{unique}
Let $S=\{x_1,x_2,...,x_{d+2}\}$ be the set of points in
$\mathbb{R}^d$ (no $d+1$ of which belong to the same hyperplane)
and let $S=S_1\cup S_2$  be the splitting satisfying $\text{\rm
conv} S_1\cap \text{\rm conv} S_2\neq \emptyset .$ Then two points
$x_i,x_j\in S$ belong to the same of two sets $S_1$ and $S_2$ if
and only if they belong to different halfspaces determined by the
hyperplane spanned by the remaining $d$ points of the set $S.$
\end{thm}

\begin{proof}
Let the points $x_i,x_j\in S$ belong to the same halfspace $H_+$
determined by the hyperplane $H$ spanned by the remaining $d$
points of $S$. Then points $x_i$ and $x_j$ could not belong to the
same of two sets $S_1$ and $S_2$. Namely, if $x_i,x_j\in S_1$,
then $S_2\subseteq H$ and $S_1\subseteq H_+$. Then $S_1\cap S_2
\subseteq H$ and so $\text{\rm conv} (S_1\setminus
\{x_i,x_j\})\cap \text{\rm conv} S_2\neq \emptyset .$ This is
impossible since the set $S\setminus \{x_i,x_j\}$ consists of $d$
points in generic position.

Let the points $x_i,x_j\in S$ belong to different halfspaces
determined by the hyperplane $H$ spanned by the remaining $d$
points of $S$. (For example, let $x_i\in H_+$ and $x_j\in H_-$.)
Then points $x_i$ and $x_j$ could not belong to different of two
sets $S_1$ and $S_2$. Namely, if $x_i\in S_1$ and $x_j\in S_2$,
then $S_1\subseteq H_+$ and $S_2\subseteq H_-$. Then $S_1\cap S_2
\subseteq H$ and so $\text{\rm conv} (S_1\setminus \{x_i\})\cap
\text{\rm conv} (S_2\setminus \{x_j\})\neq \emptyset .$ This is
impossible since the set $S\setminus \{x_i,x_j\}$ consists of $d$
points in generic position.
\end{proof}

\section{Addressing a misstep in \ref{thm:1_2_u_CG}, non-simplicial case}



    The approach in \cite{CG} in the non-simplicial case is based on presenting
    $\Delta_A$ as the union of two simplices with a common $d-1$ face. But, some
    convex polytopes with $d+2$ vertices cannot be split into two simplices. To see this,
    we will need the extension of Radon's theorem (see Theorem \ref{unique}): a set $X$
    of $d+2$ elements
    in $\mathbb{R}^d$ can be split into two disjoint subsets $X=X_1\cup X_2$ such that
    the convex hulls of $X_1$ and $X_2$ intersect and this splitting is unique
    (in a generic case). Namely, we proved that two vertices are in the same set
    ($X_1$ or $X_2$) if and only if they belong to different half-spaces
    determined by the hyperplane spanned by the remaining $d$ vertices.

    Let $A$ be a set with $d+2$ elements that has a convex hull that is split into two
    $d$-simplices with a common $d-1$ face. We will denote the set of vertices determined
    by the common face of
    these two simplices by $X_1$, and set $X_2$ will be comprised of the remaining two
    vertices. Then, the convex hulls of $X_1$ and $X_2$ will intersect. Therefore, in
    this situation, one set will always have $d$ elements, and one will have $2$
    elements. If a set may be split into two sets which both have at least three elements,
    and their convex hulls intersect, then the convex hull $\Delta_A$ cannot be split into
    two simplices.

    An easy example of such a set is $A=\{P_1(1,0,0,0),P_2(0,1,0,0),\\P_3(0,0,1,0),
    Q_1(0,0,0,1),Q_2(0,0,0,0),Q_3(1,1,1,-1)\}$. We see that convex hulls of
    $X_1=\{P_1,P_2,P_3\}$ and $X_2=\{Q_1,Q_2,Q_3\}$ intersect at common barycenter
    $\left(\frac{1}{3},\frac{1}{3},\frac{1}{3},0\right)$. It is easy to see that $A-A$
    generates $\mathbb{Z}^d$. If $A$ did have a splitting into two simplices, one set
    would have $2$, and one would have $4$ elements, which is not the case here. Since
    these splittings are unique, we conclude that $\Delta_A$ does not split into two simplices.


\section{The main theorem}

We start by an example.

\begin{example}\label{ex}

Let $d=2$ and consider the set of four points
$A=\{(1,0),(0,1),\\ (-1,0),(0,-1)\}$. For any $h\in \mathbb{N}$ the
set $hA$ contains one point with the first coordinate equal to $h$
or $-h$, two points with the first coordinate equal to $h-1$ or
$-(h-1)$, three points with the first coordinate equal to $h-2$ or
$-(h-2)$, etc. Finally, the set $hA$ contains $h+1$ points with
the first coordinate equal to $0$.

Therefore the cardinality of the set $hA$ is $|hA|=2(1+2+\cdots
+h)+h+1=(h+1)^2$.

Applying Theorem \ref{thm:1_2_u_CG} to the set $A$ would imply
that the cardinality of $hA$ is ${h+3 \choose 3}- {h-1 \choose
3}=2h^2+2$, which is incorrect. This happens since the set $A$
does not satisfy the assumption of Theorem \ref{thm:1_2_u_CG} that
the set $A-A$ generates $\mathbb{Z}^d$ additively.

\end{example}

Here we provide a little bit more general result which contains
Theorem \ref{thm:1_2_u_CG} as a special case, and also treats the
sets of points not satisfying the assumption that $A-A$ generates
$\mathbb{Z}^d$ additively, like in the example above.
\smallskip

Let us consider the set $A=\{v_1,...,v_{d+2}\}$ of $d+2$ points in
$\mathbb{Z}^d$, no $d+1$ of which are contained in the same
hyperplane. By $\widetilde{v}_i=(v_i,1)$ we denoted the lifts of
these points in $\mathbb{Z}^{d+1}$, for $i\in \{1,2,...,d+2\}$.

Let us now denote, for $i\in \{1,2,...,d+2\}$,
$D_i=\det(\widetilde{v}_1,\dots,\widetilde{v}_{i-1},\widetilde{v}_{i+1},\dots,\widetilde{v}_{d+2})$,
and $D=\mbox{GCD}(D_1,\dots,D_{d+2})$. We state now our main
theorem.

\begin{thm}\label{main}
Let $A=\{v_1,\dots,v_{d+2}\}\subset \mathbb{Z}^d$ be a set of
$d+2$ points, no $d+1$ of which are contained in the same
hyperplane. Then

        $$|hA|={h+d+1\choose d+1},\mbox{ for }1\leqslant h< {\rm Vol}(\Delta_A)d!/D$$
        and
        $$|hA|={h+d+1\choose d+1}-{h-{\rm Vol}(\Delta_A)d!/D+d+1\choose d+1},\mbox{ for }h\geqslant {\rm Vol}(\Delta_A)d!/D.$$
    \end{thm}

\medskip\noindent
{\bf Proof:}

Let $A=\{v_1,\dots,v_{d+2}\}\subset \mathbb{Z}^d$, and let $h$ be
a positive integer. First we will look for non-trivial solutions
of the system of equations in variables
$\alpha_1,\dots,\alpha_{d+2}$

$$\alpha_1v_1+\dots+\alpha_{d+2}v_{d+2}=\bfz,$$

$$\alpha_1+\dots+\alpha_{d+2}=0.$$
This is equivalent to

$$\alpha_1\widetilde{v}_1+\dots+\alpha_{d+2}\widetilde{v}_{d+2}=(\bfz,0).$$

Points $v_1,\dots,v_{d+2}$ are affine-dependent, so there exists a non-trivial solution $\mu_1,\dots,\mu_{d+2}$ of the above system of equations.

By the generic position, $D_{d+2}=\det (\widetilde{v}_1,\dots ,
\widetilde{v}_{d+1})\neq 0$, and also $\mu_{d+2}\neq 0$, since
otherwise points $v_1,\dots,v_{d+1}$ would be affine-dependent.

Multiplying the equality $\mu_1\widetilde{v}_1+\dots+\mu_{d+2}\widetilde{v}_{d+2}=(\bfz,0)$ by $1/\mu_{d+2}$, we get the identity

\begin{equation}\label{fla:mu-ovi}\frac{\mu_1}{\mu_{d+2}}\widetilde{v}_1+\dots+\frac{\mu_{d+1}}{\mu_{d+2}}\widetilde{v}_{d+1}=-\widetilde{v}_{d+2}.\end{equation}

By Cramer's rule, we have that

$$\frac{\mu_i}{\mu_{d+2}}=\det(\widetilde{v}_1,\dots,\widetilde{v}_{i-1},-\widetilde{v}_{d+2},
\widetilde{v}_{i+1},\dots,\widetilde{v}_{d+1})/\det(\widetilde{v}_1,\dots,\\\widetilde{v}_{d+1}).$$

Let
$\lambda_i:=\det(\widetilde{v}_1,\dots,\widetilde{v}_{d+1})\cdot
\frac{\mu_i}{\mu_{d+2}}=\det(\widetilde{v}_1,\dots,\widetilde{v}_{i-1},-\widetilde{v}_{d+2},
\widetilde{v}_{i+1},\dots,\widetilde{v}_{d+1})\in \mathbb{Z}$ for\\
$1\leqslant i\leqslant d+2$. Notice that $\lambda_i=\pm D_i$, i.e.
these numbers are equal up to the sign.

Multiplying identity (\ref{fla:mu-ovi}) by
$\det(\widetilde{v}_1,\dots,\widetilde{v}_{d+1})$, we get

\begin{equation}\label{fla:nula_u_1_2}\lambda_1\widetilde{v}_1+\dots+\lambda_k\widetilde{v}_k+\lambda_{k+1}\widetilde{v}_{k+1}+\dots+\lambda_{d+2}\widetilde{v}_{d+2}=(\bfz,0).\end{equation}

Now, since the coefficients $\lambda_i$ are all divisible by $D$,
we could divide this identity by $D$ and obtain

\begin{equation}\label{new}\frac{\lambda_1}{D}\widetilde{v}_1+\dots+\frac{\lambda_k}{D}\widetilde{v}_k+\frac{\lambda_{k+1}}{D}\widetilde{v}_{k+1}+\dots+\frac{\lambda_{d+2}}{D}\widetilde{v}_{d+2}=(\bfz,0).\end{equation}

Notice that all coefficients are integers and that they have no
common divisor.

Without loss of generality, we may assume that $\lambda_1,\dots,\lambda_k\geqslant 0$ and $\lambda_{k+1},\dots,\\\lambda_{d+2}<0$.

Let us now denote $r=\lambda_1+\dots+\lambda_k$. On a side note, we can deduce from this equation that the convex hull of $X_1=\{v_1,\dots,v_k\}$ intersects with the convex hull of $X_2=\{v_{k+1},\dots, v_{d+2}\}$. In particular, if $k=1$, then one set of vertices has a $d$-simplex as a convex hull, and the other is a vertex contained in the mentioned simplex.

Now, let $w\in hA$ have two representations (with non-negative coefficients):

$$\alpha_1v_1+\dots+\alpha_{d+2}v_{d+2}=\beta_1v_1+\dots+\beta_{d+2}v_{d+2}.$$
Then their difference is $\bfz$. Furthermore, the sum of
coefficients is $\sum_{i=1}^{d+2}(\alpha_i-\beta_i)=0$. Therefore,
the difference has to be a multiple of the left-hand side of
(\ref{new}). To each element $w\in hA$ corresponds exactly one
non-negative representation
$\alpha_1v_1+\dots+\alpha_{d+2}v_{d+2}$ for which $\alpha_i<
\lambda_i/D$ for at least one $1\leqslant i\leqslant k$. Namely,
if $\alpha_i\geqslant \lambda_i/D$ for $1\leqslant i\leqslant k$,
we can reduce this representation to the also non-negative
representation
$(\alpha_1-\lambda_1/D)\widetilde{v}_1+\dots+(\alpha_{d+2}-\lambda_{d+2}/D)\widetilde{v}_{d+2}$.
To obtain other non-negative representations, we can only add a
multiple of (\ref{new}). Therefore, to obtain the number of
elements in $hA$, we need to take the number of all non-negative
representations for which $\sum_{i=0}^{d+2}\alpha_i=h$, and reduce
it by the number of non-negative representations for which
$\sum_{i=0}^{d+2}\alpha_i=h$ and $\alpha_i\geqslant \lambda_i/D$,
for all $1\leqslant i\leqslant k$. Therefore, if $r/D \leqslant
h$, we have that

$$|hA|={d+h+1\choose h}-{d+1+h-r/D\choose h-r/D}={d+h+1\choose d+1}-{d+1+h-r/D\choose d+1}.$$
Otherwise, we have that

$$|hA|={d+h+1\choose d+1}.$$

Let us now determine $r$. We will denote the $d$-simplex determined by the vertices $v_1,\dots,v_{i-1},v_{i+1},\dots,v_{d+2}$ by $\Delta_i$. Note that $\lambda_i=\pm {\rm vol}(\Delta_i)\cdot d!$.

By the extension of Radon's theorem (Theorem \ref{unique}), every
generic point in $\Delta_A$ (that is not contained in any $d-1$
dimensional face of these simplices) is contained in exactly two
simplices $\Delta_i$ and $\Delta_j$, and they are such that the
vertices $v_i$ and $v_j$ belong to different sets $X_1$ and $X_2$.

Namely, if $x\in \Delta_i$, let $l$ be a half-line starting from $v_i$ passing through $x$, and let the final point of intersection of this half-line with the boundary of $\Delta_i$ belong to the face $(v_1,\dots,v_{i-1},v_{i+1},\dots,v_{j-1},v_{j+1},\dots,v_{d+2})$. Then $\Delta_j$ is the unique other simplex containing the point $x$. Since the interiors of $\Delta_i$ and $\Delta_j$ intersect, vertices $v_i$ and $v_j$ belong to the same half-space determined by the hyperplane spanned by the remaining $d$ vertices. By Radon's theorem, this means that one of $v_i$ and $v_j$ belongs to $X_1$, and the other belongs to $X_2$.

From this we see that $\Delta_A$ has a covering:

$$\Delta_A=\Delta_1\cup\dots\cup\Delta_k=\Delta_{k+1}\cup\dots\cup\Delta_{d+2}.$$

Since intersections of any two of the simplices $\Delta_1,\ldots
,\Delta_k $ and any two of the simplices $\Delta_{k+1},\ldots
,\Delta_{d+2} $ have volume $0$, and since
$\lambda_1,\dots,\lambda_k\geqslant 0$, we have that

$$r=\lambda_1+\dots+\lambda_k={\rm vol}(\Delta_1)\cdot d!+\dots+{\rm vol}(\Delta_k)\cdot d! ={\rm vol}(\Delta_A)\cdot d!.$$
\medskip

Now, we want to show that Theorem \ref{thm:1_2_u_CG} is a direct
corollary of Theorem \ref{main}. First, we prove the following

\begin{prop}
For a set $A=\{v_1,...,v_{d+2}\}$ for which the set $A-A$ generates
$\mathbb{Z}^d$ additively, the determinants $D_1,...,D_{d+2}$ have
no common divisor.
\end{prop}

\medskip\noindent
{\bf Proof:} Suppose, to the contrary, that the determinants
$D_1,...,D_{d+2}$ have common divisor $m\geq 2$. Notice that for
every $i\in \{1,2,...,d+1\}$, by subtracting the last column from
other columns we have

\begin{align*}
D_i&=\left\vert\begin{matrix}
v_1-v_{d+2} & v_2-v_{d+2} & \dots & v_{i-1}-v_{d+2} & v_{i+1}-v_{d+2} & \dots & v_{d+1}-v_{d+2} & v_{d+2}\\
0 & 0 & \dots & 0 & 0 & \dots & 0 & 1
\end{matrix}\right\vert\\
&=\left\vert\begin{matrix} v_1-v_{d+2} & v_2-v_{d+2} & \dots &
v_{i-1}-v_{d+2} & v_{i+1}-v_{d+2} & \dots & v_{d+1}-v_{d+2}
\end{matrix}\right\vert.\end{align*}

Since the vectors $v_1-v_{d+2},...,v_{d+1}-v_{d+2}$ generate
$\mathbb{Z}^d$ additively, then the unit vectors $e_j$ of standard
basis could be represented as the linear combinations with integer
coefficients of these vectors $v_i-v_{d+2}$. This implies that the
determinant of the identity matrix equals (by linearity) the
combination with integer coefficients of determinants, some of
which are $0$ (if they have two columns equal) and the remaining
are divisible by $m.$ This contradiction proves the proposition.
\bigskip

It is easy to see now that Theorem \ref{thm:1_2_u_CG} is a direct
corollary of Theorem \ref{main}. Namely, if we suppose that $A-A$
generates $\mathbb{Z}^d$ additively, by the above proposition,
$D=\mbox{GCD}(D_1,\dots,D_{d+2})=1$, and the Theorem
\ref{thm:1_2_u_CG} follows.
\bigskip

Let us now turn back to Example \ref{ex}. If we apply Theorem
\ref{main}, we see that $D_1,D_2,D_3,D_4=2$ and so $D=2$. So
theorem says

$$hA={h+3 \choose 3}- {h+3-2\cdot 2/2 \choose 3}={h+3\choose
3}- {h+1\choose 3}=(h+1)^2,$$

as we showed in Example \ref{ex}.

\section{Sumsets of a set with $d+3$ elements}

In this section we treat the case of the set $A$ of $d+3$ points
in $\mathbb{Z}^d$, especially the case when $d+1$ of them are the
vertices of a simplex containing the remaining two points.

Let us start with some examples illustrating the fact that this
case is more complicated. In particular, we will see that the
value $|hA|$ does not depend only on the convex hull $\Delta_A$ of
the set $A$ as in the previous case of the sets of $d+2$ points,
but also on the position of two remaining points inside
$\Delta_A$. Consequently, it is not a surprise that we do not
determine the exact value of $|hA|$, but provide the upper bound for this value.

\begin{example}

Let $d=1$. We will consider several sets of $4$ integers, all of
them containing integers $0,1$ and $8$ and the fourth integer
being one of $2,3,4,5,6,7$.

Let $A=\{0,1,2,8\}$. It is easy to see that for $h$ large enough,
the set $hA$ consists of all integers from $0$ to $8h$ (so, $8h+1$
of them) except for the integers $8h-1=8(h-1)+7, 8h-2=8(h-1)+6,
8h-3=8(h-1)+5, 8h-4=8(h-1)+4, 8h-5=8(h-1)+3, 8h-9=8(h-2)+7,
8h-10=8(h-2)+6, 8h-11=h(h-2)+5, 8h-17=8(h-3)+7\}$. Therefore,
$|hA|= 8h+1-9=8h-8$ in this case.

Similarly, if $h$ is large enough, for the set $A=\{0,1,3,8\}$ we
have $|hA|= 8h+1-7=8h-6$; for the set $A=\{0,1,4,8\}$ we have
$|hA|= 8h+1-9=8h-8$; for the set $A=\{0,1,5,8\}$ we have $|hA|=
8h+1-5=8h-4$; for the set $A=\{0,1,6,8\}$ we have $|hA|=
8h+1-3=8h-2$; and for the set $A=\{0,1,7,8\}$ we have $|hA|=
8h+1$.

Just for illustration, for $A=\{0,1,6,8\}$, the set $hA$ consists
of all integers from $0$ to $8h$ except for the integers
$8(h-1)+3, 8(h-1)+5,8(h-1)+7$. Notice that for the set
$A=\{0,1,7,8\}$, the set $hA$ consists of all integers from $0$ to
$8h$.

The convex hull of all these sets is the same, the interval
$[0,8]$, and all of them contain the same integer $1$. However,
the values of $|hA|$ differ.

\end{example}




Denote the vertices of $\Delta_A$ by $v_1,\dots ,v_{d+1}$, let $w$ be the $(d+2)^{\rm nd}$ element of $A$, and suppose that the $(d+3)^{\rm rd}$ is $\bfz$. Set

\begin{align*}
&\Lambda:={\rm span}_\mathbb{Z}(\widetilde{v}_1,\dots,\widetilde{v}_{d+1}),\\
&\Lambda^+:={\rm span}_\mathbb{N}(\widetilde{v}_1,\dots,\widetilde{v}_{d+1}),\\
&\Lambda_{(\bfz,1)}^+:={\rm span}_\mathbb{N}((\bfz,1),\widetilde{v}_1,\dots,\widetilde{v}_{d+1}),\\
&N_\Lambda:=\mbox{The number of integer points in the fundamental domain of }\Lambda,\\
\end{align*}

Let $\mathcal{C}_A$ be the cone over $A$. It is equal to

$$\bigcup_{m=0}^\infty \left((mw,m)+\Lambda_{(\bfz,1)}^+\right).$$
However, vector $(w,1)$ has finite order in the group $\mathbb{Z}^{d+1}/\Lambda$, which will be denoted by $o_w$. It can be seen that $o_w(w,1)\in \Lambda^+$.

To prove this, first note that $w$ belongs in the interior of simplex $\Delta_A$, which is why $(w,1)$ belongs to the boundary of the simplex determined by vertices $(\bfz,0),\widetilde{v}_1,\dots,\widetilde{v}_{d+1}$. Therefore, vector $(w,1)$ has barycentric coordinates $0\leqslant \mu_1,\dots\\ \dots ,\mu_{d+1}\leqslant 1$ such that

$$\sum_{i=1}^{d+1}\mu_i\widetilde{v}_i=(w,1).$$
By Lemma \ref{lem:racionalnost}, $\mu_i$ must be rational, and therefore $\mu_i=\frac{a_i}{q_i}$ for $1\leqslant i\leqslant d+1$, where $0\leqslant a_i\leqslant q_i$ and $(a_i,q_i)=1$. The order $o_w$ of $(w,1)$ is the least common container of $q_1,\dots,q_{d+1}$, ${\rm lcc}(q_1,\dots,q_{d+1})$. Since $o_w\frac{a_i}{q_i}$ are all non-negative integers, $o_w(w,1)\in \Lambda^+$. Therefore,

$$\mathcal{C}_A=\bigcup_{m=0}^{o_w-1}\left((mw,m)+\Lambda_{(\bfz,1)}^+\right).$$
This union need not be disjoint. From \cite[Theorem 1.2, simplicial case]{CG}, we have that $\Lambda_{(\bfz,1)}^+(t)=\frac{1-t^{{\rm Vol}(\Delta_A)d!}}{(1-t)^{d+2}}$. If $\mathcal{B}_A(t)$ is the generating series

\begin{align*}
\mathcal{B}_A(t)&=\sum_{m=0}^{o_w-1}t^m\Lambda_{(\bfz,1)}^+(t)=\sum_{m=0}^{o_w-1}t^m\left(1-t^{{\rm Vol}(\Delta_A)d!}\right)\sum_{h\geqslant 0}{h+d+1\choose h}t^h\\
&=\sum_{h\geqslant 0}b_ht^h,\end{align*}
the generating series $\mathcal{C}_A(t)=\sum_{h\geqslant 0}|hA|t^h$ will have coefficients $|hA|\leqslant b_h$ (If the union in the cone had been disjoint, there would have been an equality instead). From this we see an upper bound:

$$|hA|\leqslant$$

$$\leqslant \left\{\begin{array}{ll}
\sum\limits_{m=0}^h{m+d+1\choose m}, & h\leqslant o_w-1\\
\sum\limits_{m=0}^{o_w-1}{h+d+1-m\choose h-m}, & o_w\leqslant h\leqslant N_\Lambda-1\\
\sum\limits_{m=0}^{o_w-1}{h+d+1-m\choose h-m}-\sum\limits_{m=0}^{h-N_\Lambda}{m+d+1\choose m}, & N_\Lambda\leqslant h\leqslant N_\Lambda+o_w-1\\
\sum\limits_{m=0}^{o_w-1}{h+d+1-m\choose h-m}-\sum\limits_{m=0}^{o_w-1}{h-N_\Lambda+d+1-m\choose h-N_\Lambda-m}, & h\geqslant N_\Lambda+o_w.
\end{array}\right.$$

\bigskip

{\bf Acknowledgement:} I would like to thank my graduate thesis advisor Goran \DJ ankovi\' c for help.

\end{document}